\date{}
\renewcommand{\uppercasenonmath}[1]{}
\theoremstyle{plain}
\newtheorem*{Conflict of interest}{Conflict of interest}
\newtheorem*{Data Availability}{Data Availability}
\newtheorem{theorem}{Theorem}[section]
\newtheorem{lemma}[theorem]{Lemma}
\newtheorem{corollary}[theorem]{Corollary}
\newtheorem{example}[theorem]{Example}
\newtheorem{hypothesis}[theorem]{Hypothesis}
\newtheorem*{open question}{Open Question}
\newtheorem{definition}[theorem]{Definition}
\newtheorem*{question}{Question}
\theoremstyle{definition}
\theoremstyle{remark}
\newcommand{\M}{\mathcal{M}}
\newcommand{\Id}{\mathrm{Id}}
\def\s{\frak s}
\def\Hom{{\rm Hom}}
\def\Ker{{\rm Ker}}
\def\Im{{\rm Im}}
\def\Mod{{\rm Mod}}
\begin{document}
\begin{center}
{\large  \bf The Schr\"{o}der-Bernstein problem for relative injective modules}

\vspace{0.5cm}   Xiaolei Zhang$^{a}$

{\footnotesize
	School of Mathematics and Statistics, Shandong University of Technology,
	Zibo 255049, China\\
	
	E-mail: zxlrghj@163.com\\}

\end{center}

\bigskip
\centerline { \bf  Abstract}
\bigskip
\leftskip10truemm \rightskip10truemm \noindent

Let $(K,\M)$  be a pair satisfying some mild condition, where $K$ is a class of $R$-modules and $\M$ is a class of $R$-homomorphisms. We show that if $f:A\rightarrow B$ and $g:B\rightarrow A$ are $\M$-embeddings and $A,B$ are $K_\M$-injective, then $A$ is isomorphic  to $B$, positively answering an question proposed by Marcos and Jiri \cite{MR24}.	
\vbox to 0.3cm{}\\
{\it Key Words:}  Schr\"{o}der-Bernstein problem, $K_\M$-injective module, isomorphism.\\
{\it 2020 Mathematics Subject Classification:} 16D50.

\leftskip0truemm \rightskip0truemm
\bigskip

\section{Introduction}
Throughout this paper, $R$ always is a fixed ring with unit, and all modules are unitary.

The  Schr\"{o}der-Bernstein theorem, which states that if $A$ and $B$ are two sets and there are two injective maps $f:A\rightarrow B$ and   $g:B\rightarrow A$, then there exists a bijective map between  $A$ and $B,$ is a classical result in basic set theory. This type of problem, called Schr\"{o}der-Bernstein problem, can be extended to various branches of Mathematics. For example, Gowers \cite{G96} showed that there is a negative solution for Banach spaces. In the context of modules,   Bumby \cite{B65} obtained that the Schr\"{o}der-Bernstein problem has a positive solution
for modules which are invariant under endomorphisms of their injective envelope. Asensio {\it et al}.\cite{AKS18} obtained a positive solution for the Schr\"{o}der-Bernstein problem for modules invariant under endomorphisms of their general envelopes under some mild
conditions, including injective envelopes, pure-injective
envelopes, and flat modules  invariant under endomorphisms of cotorsion envelopes.

Recently,  Marcos and  Jiri \cite{MR24} introduced the notion of relative injective modules, which is called $K_\M$-injective modules,  in terms of a given pair $(K,\M)$ where $K$ is a class of $R$-modules and $\M$ is a class of $R$-homomorphisms (see Definition \ref{rinj}). When $K$ satisfying some mild conditions and $\M$ is either the class of embeddings, $RD$-embeddings or pure embeddings (see Hypothsis \ref{hyp} for details), $K_\M$-injective modules will be well-behaved. For example, it has the Baer-like criterion and the superstable property {\it et al}.\cite{MR24}.  A pair $(K,\M)$ is called Noetherian if every direct sum of $K_\M$-injective modules is $K_\M$-injective. The authors \cite{MR24} showed that if $(K,\M)$ satisfies Hypothesis \ref{hyp} and  is Noetherian, then the Schr\"{o}der-Bernstein problem  has a  positive solution for  $K_\M$-injective modules. Subsequently, they proposed the following question:
\begin{question}\cite[Question 5.6]{MR24} Does the previous result still hold even if $(K,\M)$ is not Noetherian?
\end{question}
In this paper, we first study the  existence of  $K_\M$-injective  envelope of $R$-modules in $K$. Utilizing this, we then give a positive answer to this question, that is, the Schr\"{o}der-Bernstein problem always has a  positive solution for all  $K_\M$-injective modules when  $(K,\M)$ satisifes Hypothsis \ref{hyp} (see Theorem \ref{main}).

\section{relative injective envelopes}
Recall from \cite{MR24} that an exact sequence of $R$-modules
$\Psi:0 \rightarrow A \xrightarrow{f} B \xrightarrow{g} C\rightarrow 0$
is said to be pure-exact if every system of linear equations with parameters in $f[A]$, which has a solution
in $B$, has a solution in $f[A].$ In this case we say that $f$ is a pure embedding and $g$ is a pure
epimorphism. We will denote the class of all pure embeddings by $Pure$.  If $f$ is the inclusion, we
say that $A$ is a pure submodule of $B$ and denote it by $A \leq_{Pure} B$.

An exact sequence $\Psi$ of $R$-modules is said to be $RD$-exact if $f[A]\cap rB = rf[A]$ for every $r\in R.$ In that
case we say that $f$ is a $RD$-embedding and $g$ is a $RD$-epimorphism. We will denote the class of
$RD$-embeddings by $RD.$ If $f$ is the inclusion, we say that $A$ is an $RD$-submodule of $B$ and denote
it by $A \leq_{RD} B$.

We denote the class of all embeddings by
$Emb$. If $A$ is a submodule of $B$, we denote
it by $A \leq_{Emb} B$. Note that we always have $Pure\subseteq RD\subseteq Emb.$ But the converse is not  true in general.

 Recently,  Marcos and  Jiri \cite{MR24} introduced the notion of relative injective modules in terms of a given pair $(K,\M)$, where $K$ is a class of $R$-modules and $\M$ is a class of $R$-homomorphisms of $R$-modules in $K$.
\begin{definition}\label{rinj}\cite[Definition 4.1]{MR24}
	Let $(K,\M)$ be a pair such that $K$ is a class of $R$-modules and $\M$ is a class of $R$-homomorphisms of $R$-modules in $K$. An $R$-module $E$ in $K$ is called $K_\M$-injective,  if for every $R$-homomorphism $f:A\rightarrow B$ in $\M$ and every  $R$-homomorphism $g:A\rightarrow E$ with $A\in K$ there is an $R$-homomorphism $h:B\rightarrow E$ such that the following diagram is commutative:
	$$\xymatrix@R=40pt@C=50pt{
A\ar[r]^{f}\ar[d]_g	&B\ar@{.>}[ld]^{h}\\
	E&}$$		
\end{definition}

Certainly, suppose $f:K\rightarrow M\in \M$, $M\in K$ and  $K$ is $K_\M$-injective. Then $K$ is a direct summand of $M$. 

\begin{example}\label{examp} \cite[Example 4.3]{MR24} We denote by $_R\Mod$ the class of all $R$-modules, $_RFlat$ the class of all flat $R$-modules,  $_RAbsP$ the class of all absolutely pure $R$-modules, and $\s\mbox{-}Tor$ the class of $\s$-torsion module (see \cite{M20}). Then
\begin{enumerate}
\item If $(K,\M)=(_R\Mod,Emb)$, then the  $K_\M$-injective modules are exactly injective modules;	
\item If $(K,\M)=(_R\Mod,RD)$, then the  $K_\M$-injective modules are exactly  $RD$-injective modules;	
\item If $(K,\M)=(_R\Mod,Pure)$, then the  $K_\M$-injective modules are exactly  Pure-injective modules;
\item If $(K,\M)=(_RFlat,Pure)$, then the  $K_\M$-injective modules are exactly  flat cotorsion modules;
\item If $(K,\M)=(\s\mbox{-}Tor,Pure)$, then the  $K_\M$-injective modules are exactly $K^{\s\mbox{-}Tor}$-pure injective modules (see \cite{MR23});
\item If $(K,\M)=(_RAbsP,Emb)$, then the  $K_\M$-injective modules are exactly injective modules.	
\end{enumerate}	
\end{example}

For further study of $K_\M$-injective modules, the authors in \cite{MR24} proposed the following Hypothesis on $K$ and $M$.

\begin{hypothesis}\label{hyp} Let $(K,\M)$ be a pair such that:
	\begin{enumerate}
		\item $K$ is a class of $R$-modules.
		\item $\M$ is either the class of embeddings, $RD$-embeddings or pure embeddings, and 
		\item $K$ is closed under 
		\begin{enumerate}
			\item direct sums,
			\item $\M$-submodules, i.e., if $A\in K$ and $B\leq_\M A$, then $B\in K$, and 
			\item $\M$-epimorphisms, it.e, if $f:A\rightarrow B$ is an $\M$-epimorphism and $A\in K$, then $B\in K$.
		\end{enumerate}
	\end{enumerate}
\end{hypothesis}
Note that all cases in Example \ref{examp} satisfy  Hypothesis \ref{hyp}. 

\begin{lemma}\label{compm} Assume $(K,\M)$ satisfies Hypothesis \ref{hyp}. The the following statements hold:
	\begin{enumerate}
		\item $K$ is closed under direct union of ascending chains.
		\item  Let $g:M\rightarrow N$ and $f:N\rightarrow L$ be $R$-homomorphisms. If the composition $f\circ g\in \M$, then so is $g$.
		\item Let $\{f_\alpha:M\rightarrow N_\alpha\}$ be a family of ascending chains of  $R$-homomorphisms in $\M$. 
		Then $\bigcup f_\alpha: M\rightarrow \bigcup N_\alpha$ is also in $\M$.
		\item Let $g:M\rightarrow N$ and $f:N\rightarrow L$ are embeddings. If the composition $f\circ g$ and the quotient $\overline{f}:N/M\rightarrow L/M$ are in $\M$, then so is $f$. 
			\item Let $\{g_\alpha:M_\alpha\rightarrow N\}$ be a family of ascending chains of  $R$-homomorphisms in $\M$. 
		Then $\bigcup g_\alpha: \bigcup M_\alpha\rightarrow  N	$ is also in $\M$.
	\end{enumerate}
\end{lemma}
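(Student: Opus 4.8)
\smallskip
\noindent\textbf{Proof strategy.} The plan is to treat the three admissible choices $\M\in\{Emb, RD, Pure\}$ uniformly by recording, for each, a finitary/tensor description of when a monomorphism $\phi$ of left $R$-modules lies in $\M$: writing $-\otimes_R C$ for a right module $C$, one has $\phi\in Emb$ iff $\phi$ is injective; $\phi\in RD$ iff $\phi$ is injective and $\phi\otimes_R\mathrm{id}_{R/rR}$ is injective for every $r\in R$ (equivalently $\phi[A]\cap rB=r\phi[A]$); and $\phi\in Pure$ iff $\phi\otimes_R\mathrm{id}_C$ is injective for every finitely presented right module $C$. Equivalently, by the definitions recalled above, membership in $\M$ is witnessed by the solvability of finite systems of linear equations (a single divisibility relation $y\in rB$ in the $RD$ case). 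Two features will be used repeatedly: such a witness only ever involves finitely many elements, and $Pure\subseteq RD\subseteq Emb$, so a pure embedding (resp. pure epimorphism) is automatically an $\M$-embedding (resp. $\M$-epimorphism) for every admissible $\M$.

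For (1) I would realize the direct union $U=\bigcup_\alpha M_\alpha=\varinjlim_\alpha M_\alpha$ as an $\M$-epimorphic image of the direct sum. The canonical map $\pi\colon\bigoplus_\alpha M_\alpha\to U$ is surjective, and it is a pure epimorphism: any homomorphism from a finitely presented module into $\varinjlim_\alpha M_\alpha$ factors through some stage $M_\alpha$ and hence lifts along the split inclusion $M_\alpha\hookrightarrow\bigoplus_\alpha M_\alpha$ followed by $\pi$. By $Pure\subseteq RD\subseteq Emb$ the map $\pi$ is therefore an $\M$-epimorphism, and since $\bigoplus_\alpha M_\alpha\in K$ by closure under direct sums, closure under $\M$-epimorphisms (Hypothesis \ref{hyp}) gives $U\in K$. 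Parts (3) and (5) are the ``union of a chain of $\M$-maps is an $\M$-map'' statements, and both follow from the finiteness feature above: for (5), given $g=\bigcup_\alpha g_\alpha\colon\bigcup_\alpha M_\alpha\to N$, any finite system with parameters in the image of $g$ (resp. any relation $y=g(x)\in rN$) already has all its parameters in some $g_\alpha[M_\alpha]$, so solvability in $N$ transfers back through the $\M$-map $g_\alpha$, while injectivity is checked one element at a time. Part (3) is dual: a solution in $\bigcup_\alpha N_\alpha$ uses only finitely many coordinates and hence lies in some $N_\alpha$, where $f_\alpha\in\M$ applies.

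Parts (2) and (4) I would settle by cancellation after tensoring. For (2), tensoring the factorization $f\circ g$ with any test module $C$ gives $(f\otimes\mathrm{id}_C)\circ(g\otimes\mathrm{id}_C)$; since the composite is injective whenever $f\circ g\in\M$, the first factor $g\otimes\mathrm{id}_C$ is injective, and ranging over the test class for $\M$ shows $g\in\M$ (the $Emb$ case being the trivial ``a composite is injective $\Rightarrow$ the first map is injective''). For (4), with $M\le N\le L$, tensoring the canonical morphism of short exact sequences $0\to M\to N\to N/M\to 0$ into $0\to M\to L\to L/M\to 0$ by a test module $C$ produces a commutative diagram with right-exact rows, in which the outer verticals $M\otimes C\to M\otimes C$ and $(N/M)\otimes C\to(L/M)\otimes C$ are injective (the first trivially, the second because $\overline f\in\M$) and the bottom-left map $M\otimes C\to L\otimes C$ is injective (because $f\circ g\in\M$). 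A short diagram chase then forces the middle map $N\otimes C\to L\otimes C$ to be injective; ranging over $C$ yields $f\in\M$. For $\M=Emb$ there is nothing to prove since $f$ is assumed an embedding, and for $\M=RD$ one may instead run the elementary divisibility chase directly inside $L$.

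The main obstacle I anticipate is bookkeeping rather than conceptual: making the three definitions of $\M$-embedding genuinely interchangeable with a single tensor/finitary criterion — in particular pinning down the $RD$ criterion as injectivity of $\phi\otimes_R\mathrm{id}_{R/rR}$ and verifying that the colimit map in (1) is pure — so that the same cancellation and diagram-chase arguments cover $Emb$, $RD$ and $Pure$ simultaneously. Once that dictionary is in place, (2)--(5) become routine, and the only genuinely module-theoretic input is the standard fact used in (1) that homomorphisms out of finitely presented modules commute with direct limits.
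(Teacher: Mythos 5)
Your proposal is correct and follows essentially the same route as the paper: for (1) the paper likewise realizes the direct union as a pure (hence $\M$-) epimorphic image of the direct sum and invokes the closure properties of Hypothesis \ref{hyp}, while for (2)--(5) the paper simply declares the statements well-known for $Emb$, $RD$ and $Pure$. Your tensor/finitary dictionary (purity via $-\otimes_R C$ for finitely presented $C$, the $RD$ condition via $R/rR$, witnesses involving only finitely many elements) together with the cancellation argument for (2) and the diagram chase for (4) is exactly the standard justification the paper leaves implicit, and all steps check out.
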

\begin{proof}
(1): It follows by any direct union of ascending chains of $R$-modules is a pure quotient (thus an $\M$-quotient) of their direct sums.

(2), (3), (4) and (5): These are well-known for  $\M$ is either the class of embeddings, $RD$-embeddings or pure embeddings.	
\end{proof}

\begin{definition} Let $\sigma:L\rightarrow M\in\M$. Then  $\sigma$ is said to be \emph{$\M$-essential} provided that every  $R$-homomorphism $\tau:M\rightarrow N\in \M$, whenever $\tau\sigma\in\M$.
\end{definition}

\begin{lemma}\label{ess}
Assume $(K,\M)$ satisfies Hypothesis \ref{hyp}.	An $\M$-embedding $\sigma:L\hookrightarrow M$ is  $\M$-essential if and only if every  $\M$-epimorphism $\tau:M\rightarrow N$ is an isomorphism, whenever $\tau\sigma$ is an $\M$-embedding.
\end{lemma}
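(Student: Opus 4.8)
The statement asserts that the $\M$-essentiality condition, which a priori must be tested against \emph{all} homomorphisms out of $M$, need only be tested against $\M$-epimorphisms; so the plan is to prove the two implications separately, with the forward one being formal and the backward one carrying the content.

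The forward implication is immediate. Assuming $\sigma$ is $\M$-essential, let $\tau\colon M\to N$ be an $\M$-epimorphism with $\tau\sigma$ an $\M$-embedding, i.e. $\tau\sigma\in\M$. By $\M$-essentiality $\tau\in\M$, so $\tau$ is in particular a monomorphism; being simultaneously an $\M$-epimorphism it is surjective, hence $\tau$ is a bijective $R$-homomorphism and therefore an isomorphism.

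For the backward implication I assume that every $\M$-epimorphism $\tau\colon M\to N$ with $\tau\sigma$ an $\M$-embedding is an isomorphism, and I must upgrade this to an arbitrary $\tau$: given $\tau\colon M\to N$ with $\tau\sigma\in\M$, I want $\tau\in\M$. First factor $\tau=\iota\circ\pi$ with $\pi\colon M\to \im\tau$ the corestriction and $\iota\colon \im\tau\hookrightarrow N$ the inclusion; since $\tau\sigma=\iota\circ(\pi\sigma)\in\M$, Lemma \ref{compm}(2) gives $\pi\sigma\in\M$. If $\pi$ (equivalently the projection $M\to M/\ker\tau$) is an $\M$-epimorphism, the hypothesis forces it to be an isomorphism, whence $\ker\tau=0$ and $\tau$ is a monomorphism; then, viewing $\sigma\colon L\hookrightarrow M$ and $\tau\colon M\hookrightarrow N$ as embeddings with $\tau\sigma\in\M$, Lemma \ref{compm}(4) yields $\tau\in\M$ as soon as the induced map $\overline{\tau}\colon M/\sigma[L]\to N/\tau\sigma[L]$ lies in $\M$.

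The main obstacle is exactly the two provisos in that last step, and it is genuine when $\M$ is $RD$ or $Pure$ rather than $Emb$. In the case $\M=Emb$ every surjection is an $\M$-epimorphism and every monomorphism is an $\M$-embedding, so the argument above closes immediately and reproduces the classical characterization of essential submodules. For $RD$- or pure embeddings, however, $\ker\tau$ need not be an $\M$-submodule of $M$ (for instance $2\mathbb{Z}\le\mathbb{Z}$ is not pure), so $M\to M/\ker\tau$ need not be an $\M$-epimorphism and cannot be fed directly into the hypothesis. The route I would take to circumvent this is to quotient not by $\ker\tau$ itself but by an $\M$-submodule $K$ chosen maximal with respect to $\sigma[L]\cap K=0$, a relative $\M$-complement of $\sigma[L]$: the projection $p\colon M\to M/K$ is then an honest $\M$-epimorphism, and maximality should force $p\sigma$ to be an $\M$-embedding together with the accompanying quotient map lying in $\M$, so that the hypothesis applies and gives $K=0$. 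The delicate point, to be handled using Lemma \ref{compm}(2),(4) and the closure properties in Hypothesis \ref{hyp}, is precisely the verification that such a $K$ does the job and returns the conclusion $\tau\in\M$ for the original $\tau$.
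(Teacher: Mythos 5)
Your forward implication is correct (and supplies a direction the paper's proof actually omits), and your opening reduction for the converse --- factor $\tau$ through its image and use Lemma \ref{compm}(2) to get $\pi\sigma\in\M$ --- coincides with the paper's proof; in fact that \emph{is} essentially the paper's entire proof. Where you stop, the paper simply applies the hypothesis to the corestriction $\pi_\tau\colon M\twoheadrightarrow\Im(\tau)$, declares it an isomorphism, and concludes $\tau=i_\tau\pi_\tau\in\M$; that is, it treats an arbitrary surjection out of $M$ as an eligible ``$\M$-epimorphism.'' So the obstacle you flag is a fair criticism of the paper under the strict reading of ``$\M$-epimorphism'' (surjection whose kernel is an $\M$-submodule, per the paper's definitions of $RD$- and pure epimorphisms), but your attempt does not resolve it: by your own admission the backward implication for $\M\in\{RD,Pure\}$ is left unproven, and that is a genuine gap, not a deferrable detail.

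Moreover, the workaround you sketch is unlikely to close as stated, for two concrete reasons. First, for an $\M$-submodule $K\le M$ maximal with $\sigma[L]\cap K=0$, maximality gives no purchase on your assertion that $p\sigma$ is an $\M$-embedding for $p\colon M\to M/K$: maximality is a statement about submodules of $M$, while $p\sigma\in\M$ is an equation-solvability condition in $M/K$; this is exactly the hard point (the paper faces the same assertion in Claim~2 of Theorem \ref{kminjenvel}, where $\pi|_N\in\M$ is likewise asserted rather than proved). Second, even granting that the hypothesis applies and forces $K=0$, this only says no nonzero $\M$-submodule of $M$ meets $\sigma[L]$ trivially; it does not give injectivity of your original $\tau$, since $\ker\tau$ is merely a submodule with $\ker\tau\cap\sigma[L]=0$ and need not be (or lie inside) an $\M$-submodule, and it says nothing about the remaining requirement that $\Im(\tau)$ be an $\M$-submodule of the codomain --- the condition $i_\tau\in\M$, which your Lemma \ref{compm}(4) route leaves conditional and which the paper, for that matter, also never derives. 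One cannot cancel on that side: $g\in\M$ and $fg\in\M$ do not imply $f\in\M$, as the chain $0\le \mathbb{Z}/2\mathbb{Z}\le \mathbb{Z}/4\mathbb{Z}$ with $\M=Pure$ shows. In short, you have correctly located the crux, but the missing verification is the theorem; the paper's argument goes through only under the loose reading in which the lemma's hypothesis quantifies over all surjections $\tau$ with $\tau\sigma\in\M$, and under that reading your ``main obstacle'' dissolves and your own argument would close just as the paper's does.
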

\begin{proof}
	Let $\tau:M\rightarrow K$ be an $R$-homomorphism such that $L\stackrel{ \sigma}{\hookrightarrow}  M\xrightarrow{\tau} K$ is an  $\M$-monomorphism, i.e., the natural composition $$\delta:L\stackrel{ \sigma}{\hookrightarrow}  M\stackrel{ \pi_\tau}{\twoheadrightarrow} \Im(\tau)\stackrel{ i_\tau}{\hookrightarrow} K$$ is in $\M$,
	It follows by Lemma \ref{compm}(2) that  $L\stackrel{ \sigma}{\hookrightarrow}  M\stackrel{ \pi_\tau}{\twoheadrightarrow} \Im(\tau)$ is also in $\M$.
	By assumption, $\pi_\tau$ is an isomorphism. Consequently,
	$\tau=i_\tau\pi_\tau$ is in $\M$. Hence
	$\sigma$ is  $\M$-essential.	
\end{proof}

Let $M$ be an $R$-module and $\mathscr{C}$ be a  class  of $R$-modules. Recall from \cite{EJ11} that an $R$-homomorphism $f:M\rightarrow C$ with  $C\in \mathscr{C}$ is said to be a  \emph{$\mathscr{C}$-preenvelope}  provided that  for any $C'\in \mathscr{C}$, the natural homomorphism  $\Hom_{R}(C,C')\rightarrow \Hom_{R}(M,C')$ is an epimorphism.
If, moreover, $f$ is left minimal, that is, every endomorphism $h$ such that $f=hf$  is an automorphism,  then $f$ is said to be a  \emph{$\mathscr{C}$-envelope}.

\begin{theorem}\label{equal}
Assume $(K,\M)$ satisfies Hypothesis \ref{hyp}. Let $M\in K$ be an $R$-module which has a $K_\M$-injective preenvelope in $\M$. Suppose $\sigma:M\rightarrow E$ is an $R$-homomorphism with $E$ $K_\M$-injective. Then $\sigma$ is a  $K_\M$-injective envelope if and only if it is an $\M$-essential $\M$-monomorphism.
\end{theorem}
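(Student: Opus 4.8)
The plan is to prove both implications from the defining properties of $K_\M$-injectivity, the closure facts in Lemma \ref{compm}, and the splitting observation recorded after Definition \ref{rinj}. A useful preliminary, which I would isolate first, is that \emph{any} $\M$-monomorphism $\sigma:M\to E$ with $M\in K$ and $E$ $K_\M$-injective is automatically a $K_\M$-injective preenvelope: given a $K_\M$-injective $E'$ and any $g:M\to E'$, the $K_\M$-injectivity of $E'$ applied to $\sigma\in\M$ (with $M\in K$) and $g$ furnishes the required factorization through $E$. Hence in both directions the preenvelope property is free, and the real content lies in the interplay between left minimality and $\M$-essentiality.

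For the implication from right to left, suppose $\sigma$ is an $\M$-essential $\M$-monomorphism. By the preliminary it is a preenvelope, so it remains to prove left minimality. Let $h:E\to E$ satisfy $h\sigma=\sigma$. Since $\sigma\in\M$ we have $h\sigma\in\M$, so $\M$-essentiality forces $h\in\M$; in particular $h$ is injective. As $E$ is $K_\M$-injective and $E\in K$, the splitting remark after Definition \ref{rinj} applied to $h\in\M$ yields $r:E\to E$ with $rh=\Id_E$. Then $r\sigma=r(h\sigma)=(rh)\sigma=\sigma$, so the same essentiality argument gives $r\in\M$, whence $r$ is injective; combined with the surjectivity forced by $rh=\Id_E$, this makes $r$ an isomorphism, and therefore $h=r^{-1}$ is an automorphism. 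Thus $\sigma$ is left minimal and is a $K_\M$-injective envelope.

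For the implication from left to right, suppose $\sigma$ is a $K_\M$-injective envelope. I would first invoke the hypothesis that $M$ admits a $K_\M$-injective preenvelope $\lambda:M\to F$ in $\M$: since the envelope $\sigma$ is in particular a preenvelope and $F$ is $K_\M$-injective, $\lambda$ factors as $\lambda=u\sigma$ for some $u:E\to F$, and then $u\sigma=\lambda\in\M$ gives $\sigma\in\M$ by Lemma \ref{compm}(2). This is the only place the preenvelope hypothesis is used. To see that $\sigma$ is $\M$-essential, take any $\tau:E\to N$ with $\tau\sigma\in\M$; then $N\in K$ (as $\M$ consists of maps between modules in $K$), and $K_\M$-injectivity of $E$ applied to $\tau\sigma\in\M$ and $\sigma:M\to E$ produces $\psi:N\to E$ with $\psi\tau\sigma=\sigma$. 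Setting $\phi=\psi\tau$ gives $\phi\sigma=\sigma$, so left minimality makes $\phi$ an automorphism; consequently $(\phi^{-1}\psi)\tau=\Id_E$, so $\tau$ is a split monomorphism and hence lies in $\M$. This is precisely the defining condition for $\M$-essentiality of $\sigma$.

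The step I expect to be the main obstacle is the essentiality half of the forward direction: one must manufacture an endomorphism of $E$ fixing $\sigma$ out of an arbitrary $\tau$ with $\tau\sigma\in\M$. The key idea is to exploit $K_\M$-injectivity of $E$ to extend $\sigma$ back along $\tau\sigma$, so that left minimality can be brought to bear and $\tau$ is recognized as a split monomorphism; one then uses that a split monomorphism automatically belongs to $\M$ for each of the three admissible choices (embeddings, $RD$-embeddings, pure embeddings). A secondary point requiring care throughout is the bookkeeping that all modules in play lie in $K$, which is guaranteed because $\M$ consists of homomorphisms between modules in $K$ and $K$ is closed under $\M$-epimorphisms, so that the definition of $K_\M$-injectivity may legitimately be invoked at each step.
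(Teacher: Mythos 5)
Your proposal is correct and follows essentially the same route as the paper: both directions hinge on extending $\sigma$ along the relevant map via $K_\M$-injectivity of $E$ and playing left minimality against $\M$-essentiality, with Lemma \ref{compm}(2) extracting $\sigma\in\M$ from the hypothesized preenvelope. Your local streamlinings --- verifying essentiality directly by exhibiting $\tau$ as a split monomorphism instead of routing through Lemma \ref{ess}, applying essentiality to the retraction $r$ (using $r\sigma=\sigma$) rather than to the projection onto $\Im(\tau)$ in the decomposition $E=\Im(\tau)\oplus\Ker(\tau')$, and observing that the preenvelope property of $\sigma$ is automatic from the definition of $K_\M$-injectivity (so the preenvelope hypothesis is needed only to conclude $\sigma\in\M$ in the forward direction) --- simplify the write-up but do not constitute a different argument.
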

\begin{proof} Suppose $\sigma:M\rightarrow E$ is a  $K_\M$-injective envelope of $M\in K$. Then $\sigma$ is in $\M$.
	We claim that $\sigma$ is $\M$-essential. Indeed, let $\tau:E\rightarrow N$ be an $\M$-epimorphism such that $\tau\sigma\in \M$. Then there is an $R$-homomorphism $\lambda: N\rightarrow E$ as $E$ is $K_\M$-injective.
	
	$$\xymatrix@R=20pt@C=50pt{
		&E\ar@{->>}[d]^{\tau}\\
		M\ar[r]^{\tau\sigma}\ar[ru]^{\sigma}\ar[rd]_{\sigma} &N\ar[d]^{\lambda} \\
		& E\\}$$
	Since $\sigma$ is left minimal, $\lambda\tau$ is an isomorphism. So $\tau$ is in $\M$ by Lemma \ref{compm}(2), and hence $\tau$ is  an isomorphism. Consequently,  $\sigma$ is $\M$-essential by Lemma \ref{ess}.
	
	On the other hand, suppose $\sigma:M\hookrightarrow E\in\M$ is  $\M$-essential  with $E$ an  $K_\M$-injective module. Let  $\varepsilon:M\rightarrow M^{'}\in\M$ be a $K_\M$-injective preenvelope. Then there is an $R$-homomorphism $h:E\rightarrow E'$ such that the following diagram is commutative:$$\xymatrix@R=20pt@C=50pt{M\ar@{^{(}->}[r]^{\sigma}\ar[rd]_{\varepsilon} &E\ar@{.>}[d]^{h} \\
		& E'\\}$$
It is easy to check that $\sigma$ is also a $K_\M$-injective preenvelope.
Next, we will show that $\sigma$ is left minimal. Indeed, let $\tau:E\rightarrow E$ be an  $R$-homomorphism such that $\tau\sigma=\sigma$. Since $\sigma$ is $\M$-essential, $\tau$ is in $\M$. Consider the $\M$-exact sequence $0\rightarrow E\xrightarrow{\tau} E\rightarrow C\rightarrow0$. Since $E$ is $K_\M$-injective, there is an $R$-homomorphism $\tau':E\rightarrow E$ such that $\tau'\tau=\Id_E$. So $E=\Im(\tau)\oplus\Ker(\tau')$. Note that $\Im(\sigma)\subseteq\Im(\tau)$ and we have the following commutative diagram:
	$$\xymatrix@R=20pt@C=50pt{&\Im(\tau)\oplus\Ker(\tau')\ar@{.>}[dd]^{\delta:=\begin{pmatrix}
				1 & 0\\
				0 & 0\\
		\end{pmatrix}}\\	
		M\ar@{^{(}->}[ru]^{\sigma}\ar@{_{(}->}[rd]_{\sigma} &
		\\
		&\Im(\tau)\oplus\Ker(\tau')\\}$$
	Since $\sigma$ is $\M$-essential, $\delta$ is in $\M$ and so $\Ker(\tau')=0$. Hence $E=\Im(\tau)$ which implies that $\tau$ is also an epimorphism, and thus is an isomorphism.  Therefore $\tau$ is left minimal.   Consequently, $\sigma$ is a  $K_\M$-injective envelope of $M$.	
\end{proof}

\begin{theorem}\label{kminjenvel} Assume $(K,\M)$ satisfies Hypothesis \ref{hyp}. Let $M\in K$ be an $R$-module which has a $K_\M$-injective preenvelope in $\M$. Then  $M$ has a $K_\M$-injective envelope.
\end{theorem}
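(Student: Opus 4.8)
The plan is to lean on the characterization of envelopes just obtained in Theorem \ref{equal}: since $M$ already has a $K_\M$-injective preenvelope in $\M$, it suffices to produce a single $\M$-essential $\M$-monomorphism from $M$ into some $K_\M$-injective module, and that map will automatically be a $K_\M$-injective envelope. So I would fix a $K_\M$-injective preenvelope $\varepsilon\colon M\hookrightarrow M'$ (which lies in $\M$, and which I identify with an inclusion $M\subseteq M'$) and carve out of $M'$ a maximal $\M$-essential extension of $M$.

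First I would run Zorn's lemma on the family $\Sigma$ of submodules $E$ with $M\subseteq E\subseteq M'$ for which $M\hookrightarrow E$ is $\M$-essential. The only point needing care is that $\Sigma$ is closed under unions of chains, and this is exactly where Lemma \ref{compm} enters: for a chain $\{E_\alpha\}$ in $\Sigma$ and any $\tau\colon\bigcup E_\alpha\to N$ with $\tau|_M\in\M$, each restriction $\tau|_{E_\alpha}$ lies in $\M$ by $\M$-essentiality of $M\hookrightarrow E_\alpha$, whence $\tau=\bigcup(\tau|_{E_\alpha})\in\M$ by Lemma \ref{compm}(5); thus $M\hookrightarrow\bigcup E_\alpha$ is $\M$-essential. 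This yields a maximal element $E\in\Sigma$. Applying $\M$-essentiality of $M\hookrightarrow E$ to the map $\iota\colon E\hookrightarrow M'$ itself, whose composite with $M\hookrightarrow E$ is the $\M$-embedding $M\hookrightarrow M'$, already shows $\iota\in\M$, so $E\leq_\M M'$ and hence $E\in K$.

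Next I would show that $E$ admits no proper $\M$-essential extension. Given an $\M$-essential $\M$-embedding $j\colon E\hookrightarrow F$ with $F\in K$, injectivity of $M'$ extends $\iota$ to $\psi\colon F\to M'$ with $\psi j=\iota$. Factoring $\psi=i_\psi\pi_\psi$ through its image, the relation $\iota=i_\psi(\pi_\psi j)\in\M$ forces $\pi_\psi j\in\M$ by Lemma \ref{compm}(2), hence $\pi_\psi\in\M$ by $\M$-essentiality of $j$; being a surjective $\M$-embedding, $\pi_\psi$ is an isomorphism, so $\psi$ is injective and $\Im(\psi)$ is a submodule of $M'$ containing $E$ on which $M\hookrightarrow\Im(\psi)$ is $\M$-essential (a composite of $M\hookrightarrow E$ with the copy of $j$ sitting inside $M'$). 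Maximality of $E$ then forces $\Im(\psi)=E$, so $j$ is onto and therefore an isomorphism.

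Finally I would deduce that $E$ is itself $K_\M$-injective by splitting it off from $M'$. Using Zorn once more (with chains handled again by Lemma \ref{compm}(5)), choose $C\leq_\M M'$ maximal with $E\cap C=0$; then $M'\to M'/C$ is an $\M$-epimorphism, so $M'/C\in K$, and the induced map $\bar\iota\colon E\to M'/C$ is injective with image large in $M'/C$ (in the ordinary sense) by maximality of $C$. The crux of the whole argument is to verify that $\bar\iota$ is an $\M$-essential $\M$-embedding; granting this, the previous paragraph forces $\bar\iota$ to be an isomorphism, i.e.\ $M'=E\oplus C$, so $E$ is a direct summand of the $K_\M$-injective module $M'$ and is therefore itself $K_\M$-injective. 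Then $M\hookrightarrow E$ is an $\M$-essential $\M$-monomorphism into a $K_\M$-injective module, and Theorem \ref{equal} identifies it as a $K_\M$-injective envelope of $M$. I expect the genuine obstacle to be precisely this last splitting step in the $RD$ and $Pure$ cases: one must show that for a suitable complement $C$ the map $E\to M'/C$ remains an $\M$-embedding (that $(E+C)/C$ is $RD$- resp.\ pure in $M'/C$) as well as $\M$-essential, since $RD$-purity and purity are not automatically inherited by such quotients and may require choosing $C$ among the $\M$-submodules and invoking Lemma \ref{compm} with more care.
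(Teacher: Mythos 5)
Your proposal follows the paper's proof almost step for step: the paper likewise fixes the preenvelope $\varepsilon\colon M\rightarrow M'$, runs Zorn's lemma to get a maximal $\M$-essential extension $M\hookrightarrow N$ inside $M'$ (your $E$), shows $N$ admits no proper $\M$-essential extension by exactly your extend-and-factor-through-the-image argument, chooses an $\M$-submodule $H'\leq M'$ maximal with $H'\cap N=0$ (your $C$), and splits $M'=N\oplus H'$ to conclude via Theorem \ref{equal}. The one step you leave open --- ``granting'' that $\bar\iota\colon E\rightarrow M'/C$ is an $\M$-essential $\M$-embedding --- is a genuine gap in your write-up, and half of it is closable with tools you already listed. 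For essentiality: every $\M$-epimorphism out of $M'/C$ is, up to isomorphism, a natural quotient $\nu\colon M'/C\rightarrow M'/H$ with $C\subseteq H$; if $\nu\bar\iota\in\M$, then $H\cap E=0$ (as $\nu\bar\iota$ is injective), and the kernel inclusion $H/C\hookrightarrow M'/C$ of the $\M$-epimorphism $\nu$ is an $\M$-embedding, so Lemma \ref{compm}(4) applied to $C\hookrightarrow H\hookrightarrow M'$ gives $H\leq_\M M'$; maximality of $C$ then forces $H=C$, so $\nu$ is an isomorphism, and Lemma \ref{ess} delivers essentiality. This is exactly where your decision to choose $C$ among $\M$-submodules rather than arbitrary submodules pays off, as you suspected it would.

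The other half --- that $\bar\iota$ lies in $\M$ at all in the $RD$ and pure cases --- you correctly flag as the delicate point, but you should know the paper does not dispose of it either: it simply asserts that the canonical map $\pi\colon M'\twoheadrightarrow M'/H'$ ``induces in $\M$'' the map $\pi|_N$, with no argument, and for $\M=RD$ or $\M=Pure$ this does not follow automatically from $N\leq_\M M'$, $H'\leq_\M M'$ and $N\cap H'=0$. So your instinct about where the real difficulty sits is accurate, and your anticipated remedies (a refined choice of $C$, more use of Lemma \ref{compm}) are the right direction; but as submitted your proof is incomplete at precisely this step, since the splitting $M'=E\oplus C$, and hence the $K_\M$-injectivity of $E$, rests entirely on it. A complete argument would need to supply this verification (or a citation), after which the concluding appeal to Theorem \ref{equal} goes through exactly as you and the paper both state.
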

\begin{proof}
 Let $M$ be an $R$-module in $K$. Let  $\varepsilon:M\rightarrow M^{'}\in \M$ be a $K_\M$-injective preenvelope. Then we can see $M$ as an $R$-submodule of $ M^{'}$ by $\varepsilon$. Set $$\Lambda=\{M\stackrel{ \sigma}{\hookrightarrow} N\mid N\leq M^{'},\ N\in K, \sigma\  \mbox{is an}\  \M\mbox{-essential}\ \M\mbox{-embedding}\}.$$	
Since $\Id_M\in \Lambda$, 	$\Lambda$ is not empty. We order $\Lambda$ as follows: let $\sigma_i:M\rightarrow N_i$ and  $\sigma_j:M\rightarrow N_j$ in $\Lambda$. Define $\sigma_i\leq \sigma_j$ if and only if there is an $\M$-embedding $\delta_{ij}$ such that the following diagram is  commutative:	$$\xymatrix@R=20pt@C=50pt{M\ar@{^{(}->}[r]^{\sigma_i}\ar@{_{(}->}[rd]_{\sigma_j} &N_i\ar@{^{(}->}[d]^{\delta_{ij}} \\
	& N_j\\}$$

\textbf{Claim 1: $\Lambda$ has a maximal element} $\boldsymbol{\sigma:M\hookrightarrow N}$\textbf{.} Indeed, let $\Gamma=\{\sigma_i\mid i\in A\}$ be a total subset of 	$\Lambda$. Then $\{N_i,\delta_{ij}\mid i\leq j\in A\}$ is a directed system of $R$-modules. Set $N=\bigcup\limits_{i\in A}N_i$ and $\sigma=\bigcup\limits_{ A}\sigma_i:M\rightarrow N$. 
It follows by Lemma \ref{compm}(1,2) that $N\in K$ and $\sigma$ is also an $\M$-embedding. We will show that $\sigma$ is $\M$-essential. Let $\tau:N\rightarrow L$ be an $R$-homomorphism such that  composition map $\phi:M\stackrel{ \sigma}{\hookrightarrow} N\xrightarrow{\tau} L$ is in $\M$.  For each $i,j\in A$, consider the following commutative diagram:

$$\xymatrix@R=30pt@C=60pt{&N_i\ar@{_{(}->}[d]_{\delta_{ij}}\ar[rdd]^{\tau\delta_i}\ar@{^{(}->}@/^1pc/[dd]^{\delta_{i}}&\\ &N_j\ar@{_{(}->}[d]_{\delta_j}\ar[rd]^{\tau\delta_j}&\\
	M\ar[ruu]^{\sigma_i}\ar[ru]^{\sigma_j}\ar[r]_{\sigma}&N\ar[r]_{\tau}	& L\\}$$
Since each $\sigma_{i}$ is $\M$-essential and $\tau\delta_{i}\sigma_{i}=\tau\sigma$ is in $\M$, we have $\tau\delta_{i}$ is also in $\M$.  Hence $\tau=\bigcup\limits_{A} \tau\delta_{i}$ is also in $\M$  by Lemma \ref{compm}(2) again. Consequently,  $\sigma$ is $\M$-essential. It follows that  $\sigma$ is an upper bound of $\Gamma$ in $\Lambda$.
It follows by Zorn Lemma that $\Lambda$ has a maximal element. We also it denoted by $\sigma:M\hookrightarrow N.$

\textbf{Claim 2: $\boldsymbol{N}$ is a $\boldsymbol{K_\M}$-injective module.} First, we will show $N$ has no nontrivial $\M$-essential $\M$-embedding. Indeed, let $\mu:N\hookrightarrow X$ be an $\M$-essential $\M$-embedding. Since $ M^{'}$ is $K_\M$-injective, there is an $R$-homomorphism $\xi:X\rightarrow  M^{'}$ such that the following diagram is commutative:
$$\xymatrix@R=30pt@C=60pt{N\ar@{^{(}->}[r]^{\mu}\ar@{_{(}->}[rd]_{i} &X\ar@{.>}[d]^{\xi} \\
	& M^{'}\\}$$
Since $\mu$ is  $\M$-essential  and $i=\xi\mu$ is in $\M$, we have $\xi$ is in $\M$. Let $\beta:M\stackrel{ \sigma}{\hookrightarrow}N\stackrel{ \mu}{\hookrightarrow}X\xrightarrow[\cong]{\pi_\xi}\xi(X)$ be the composition map. Then $\beta\in\Gamma$ and $\beta\geq \sigma$. Since $\sigma$ is maximal in $\Gamma$, we have $N=X$. Consequently, $N$ has no nontrivial $\M$-essential $\M$-embedding. Now, let $\Psi$ the set of $\M$-submodules $H\subseteq  M^{'}$ with $H\cap N=0$. Since $0\in \Psi$, $\Psi$ is nonempty.  Order $\Psi$ as follows $H_i\leq H_j$ if and only if $H_i$ is an $\M$-submodule of  $H_j$. Next, we will show $\Psi$ has a maximal element. Let $\Phi=\{H_i\mid i\in B\}$ be a total ordered subset of $\Psi$. Consider the following commutative diagram:
$$\xymatrix@R=30pt@C=100pt{H_i\ar@{^{(}->}[d]^{\delta_{ij}}\ar[rdd]^{\alpha_i}\ar@{_{(}->}@/_1pc/[dd]_{\delta_{i}}&\\ H_j\ar@{^{(}->}[d]^{\delta_j}\ar[rd]^{\alpha_j}&\\
	\bigcup\limits_{ B}H_i\ar[r]^{\alpha}	&  M^{'}\\}$$
Since each $\alpha_i$ is in $\M$, $\alpha=\bigcup\limits_{ B}\alpha_i$ is in $\M$ by  Lemma \ref{compm}(5). It is easy to verify $ H'\cap N=0$. Hence,  $ H'\in\Psi$ is an upper bound of $\Phi$. It follows by Zorn Lemma that  $\Psi$ has a maximal element, which is also denoted by $H'$. Then $H'$ is an $\M$-submodule of $ M^{'}$ with $H'\cap N=0$.  So the canonical map $\pi: M^{'}\twoheadrightarrow  M^{'}/H'$ induces  in $\M$
$\pi|_N:N\hookrightarrow M^{'}/H'$. Then, we will show $\pi|_N$ is $\M$-essential.
Let $\nu: M^{'}/H'\twoheadrightarrow  M^{'}/H$ be a  natural $\M$-epimorphism such that $\nu\pi_N$ is in $\M$, that is, the following diagram is commutative:
$$\xymatrix@R=30pt@C=60pt{N\ar@{^{(}->}[r]^{\pi|_N}\ar@{_{(}->}[rd]_{\nu\pi|_N} & M^{'}/H'\ar@{->>}[d]^{\nu} \\
	& M^{'}/H\\}$$
Then $H\cap N=0,$ and $H$, which contains $H'$, is an $\M$-submodule of $M^{'}$ by  Lemma \ref{compm}(4). By the maximality of $H'$, we have $H=H'$, and hence $\nu$ is an identity.
Consequently, $\pi|_N$ is $\M$-essential by Lemma \ref{ess}. Finally, we will show $N$ is $K_\M$-injective.  Since $N$  has no nontrivial $\M$-essential $\M$-monomorphism as above,
we have $\pi|_N$ is actually an isomorphism. Thus the exact sequence
$$0\rightarrow H'
\rightarrow
M^{'}
\xrightarrow{(\pi|_N)^{-1}\pi} N\rightarrow 0$$
splits. It follows that $N$ is a direct summand of $ M^{'}$. Consequently, $N$ is a $K_\M$-injective module.

In conclusion,  it follows by Theorem \ref{equal} that $\sigma:M\rightarrow N$ is a $K_\M$-injective envelope.		
\end{proof}

\section{Schr\"{o}der-Bernstein properties}

It is well-known that a ring is Noetherian if and only if every direct sum of injective modules is injective. 

\begin{definition}\cite[Definition 5.2]{MR24} Assume $(K,\M)$ is a pair such that $K$ is a class of $R$-modules and $\M$ is a class of $R$-homomorphisms. Then $(K,\M)$ is called Noetherian provided that every direct sum of $K_\M$-injective modules is $K_\M$-injective.	
\end{definition}	

The well-known Schr\"{o}der-Bernstein theorem for injective modules states that if $f:A\rightarrow B$ and $g:B\rightarrow A$ are embeddings and $A,B$ are injective, then $A$ is isomorphic  to $B$ (see \cite{AKS18}). The authors in \cite{MR24} obtained the Schr\"{o}der-Bernstein theorem for relative injective modules under the Noetherian condition and the Hypothesis \ref{hyp}.

\begin{lemma} \cite[Lemma 5.5]{MR24} Assume $(K,\M)$ satisfies Hypothesis \ref{hyp} and $(K,\M)$ is Noetherian.   If $f:A\rightarrow B$ and $g:B\rightarrow A$ are $\M$-embeddings and $A,B$ are $K_\M$-injective, then $A$ is isomorphic  to $B$.	
\end{lemma}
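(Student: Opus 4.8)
The plan is to reduce the statement to the classical Eilenberg-swindle argument, with the Noetherian hypothesis entering at exactly one point: to pass from a finite direct-sum absorption to a countably infinite one.

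First I would record the basic splitting principle. Since $f\colon A\to B$ lies in $\M$, $A$ is $K_\M$-injective, and $B\in K$ (every $K_\M$-injective module belongs to $K$), the observation following Definition \ref{rinj} shows that $f(A)$ is a direct summand of $B$; as $f$ is a monomorphism this yields $B\cong A\oplus P$ for a complement $P$, and symmetrically $A\cong B\oplus Q$. I would then check that $P$ and $Q$ are themselves $K_\M$-injective: a direct summand $E_1$ of a $K_\M$-injective module $E$ is again $K_\M$-injective, since the split inclusion $E_1\hookrightarrow E$ is a pure embedding (hence in $\M$ in each admissible case), so $E_1\in K$ by Hypothesis \ref{hyp}, and any lifting problem for $E_1$ can be pushed forward along $E_1\hookrightarrow E$ and retracted along the projection. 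Substituting the two splittings into one another gives $A\cong B\oplus Q\cong A\oplus P\oplus Q$ and likewise $B\cong B\oplus P\oplus Q$; writing $T=P\oplus Q$, this is the finite absorption $A\cong A\oplus T$ and $B\cong B\oplus T$.

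The heart of the proof is to upgrade this to the infinite absorption $B\cong B\oplus T^{(\mathbb{N})}$, where $T^{(\mathbb{N})}=\bigoplus_{i\ge 1}T$. Starting from $B\cong B\oplus T$ I would iterate the decomposition internally: write $B=B_1\oplus T_1$ with $B_1\cong B$ and $T_1\cong T$, split $B_1$ again, and continue, producing an independent internal family $\{T_i\}_{i\ge 1}$ of submodules of $B$ with each $T_i\cong T$ and each partial sum $\bigoplus_{i=1}^{n}T_i$ a direct summand of $B$. These partial inclusions form an ascending chain of $\M$-embeddings (split monomorphisms are pure, hence in $\M$), so by Lemma \ref{compm}(5) their union $\bigoplus_{i\ge 1}T_i\hookrightarrow B$ is again an $\M$-embedding. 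Now the Noetherian hypothesis enters: $T\cong P\oplus Q$ is $K_\M$-injective, hence so is the direct sum $\bigoplus_{i\ge 1}T_i\cong T^{(\mathbb{N})}$. A $K_\M$-injective module included in $B\in K$ via an $\M$-embedding is a direct summand of $B$ (again the observation after Definition \ref{rinj}), so $B\cong T^{(\mathbb{N})}\oplus B_\infty$; since $T^{(\mathbb{N})}\oplus T^{(\mathbb{N})}\cong T^{(\mathbb{N})}$, this yields $B\cong B\oplus T^{(\mathbb{N})}$ (and symmetrically for $A$).

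Finally I would cash this in. Using $A\cong B\oplus Q$, the infinite absorption $B\cong B\oplus T^{(\mathbb{N})}$, and the identity $Q\oplus T^{(\mathbb{N})}\cong T^{(\mathbb{N})}$ (valid because $T^{(\mathbb{N})}\cong P^{(\mathbb{N})}\oplus Q^{(\mathbb{N})}$ and $Q\oplus Q^{(\mathbb{N})}\cong Q^{(\mathbb{N})}$), one computes
\[
A\cong B\oplus Q\cong B\oplus T^{(\mathbb{N})}\oplus Q\cong B\oplus T^{(\mathbb{N})}\cong B,
\]
which is the desired isomorphism. The main obstacle is the middle step: the finite isomorphism $B\cong B\oplus T$ does not by itself produce the infinite one, and it is precisely the Noetherian hypothesis—guaranteeing that the internal copy $\bigoplus_{i\ge1}T_i\cong T^{(\mathbb{N})}$ is $K_\M$-injective and therefore splits off $B$—that bridges the gap. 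Everything else reduces to the standard swindle manipulations with countable direct sums.
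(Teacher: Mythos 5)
Your argument is correct, and every ingredient you invoke is actually available in the paper: the observation following Definition \ref{rinj} gives the splittings $B\cong A\oplus P$ and $A\cong B\oplus Q$, and later splits $\bigoplus_{i\ge 1}T_i$ off $B$ once Noetherianness makes that sum $K_\M$-injective; split monomorphisms are pure, hence lie in $\M$ for each of the three admissible choices, which justifies both $P,Q\in K$ and your retraction argument that direct summands of $K_\M$-injectives are $K_\M$-injective; and Lemma \ref{compm}(5) is exactly what is needed to pass from the ascending chain of split partial sums to the $\M$-embedding $\bigoplus_{i\ge 1}T_i\hookrightarrow B$. You also correctly isolate the one real obstruction, namely that the finite absorption $B\cong B\oplus T$ does not by itself give $B\cong B\oplus T^{(\mathbb{N})}$, and that the Noetherian hypothesis enters precisely to make the internal union split. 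However, your route is genuinely different from the paper's. The paper never proves this lemma itself (it is quoted from \cite{MR24}); its own argument, Theorem \ref{main}, establishes the stronger statement with the Noetherian hypothesis removed. There the iteration is run on the Bumby-type double decomposition $A=C_0\oplus B_1$, $B_1=D_0\oplus A_1$, producing $E=\bigcup_{n}E_n$, a pure (hence $\M$-) submodule of both $A$ and $B_1$; instead of requiring such unions of summands to be $K_\M$-injective --- which is exactly what Noetherianness buys you --- the paper splits off the $K_\M$-injective envelope $K(E)$ from both $A$ and $B_1$, with existence of that envelope supplied by Theorem \ref{kminjenvel} (via Theorem \ref{equal}), and transports the absorption $E\cong C_0\oplus E$ through envelopes to obtain $K(E)\cong C_0\oplus K(E)$, whence $A=C_0\oplus B_1\cong B_1\cong B$. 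In short: your proof is the Eilenberg-style swindle, more elementary in that it needs no envelope theory, but it consumes the Noetherian hypothesis; the paper's proof pays the cost of constructing $K_\M$-injective envelopes and in return answers \cite[Question 5.6]{MR24} by eliminating Noetherianness altogether.
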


Subsequently, they proposed the following question:

\begin{question}\cite[Question 5.6]{MR24} Does the previous result still hold even if $(K,\M)$ is not Noetherian?
\end{question}

The main result of this paper is to give a positive answer to this question.

\begin{theorem}\label{main} Assume $(K,\M)$ satisfies Hypothesis \ref{hyp}.   If $f:A\rightarrow B$ and $g:B\rightarrow A$ are $\M$-embeddings and $A,B$ are $K_\M$-injective, then $A$ is isomorphic  to $B$.	
\end{theorem}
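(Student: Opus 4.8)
The plan is to reduce the isomorphism to manipulating a single countable direct sum, and to keep everything inside the class of $K_\M$-injectives by replacing that sum with its $K_\M$-injective envelope, whose existence is now guaranteed by Theorem \ref{kminjenvel}. First I would split. Since $f:A\to B$ is an $\M$-embedding, $A$ is $K_\M$-injective and $B\in K$, the splitting property recorded right after Definition \ref{rinj} gives $B\cong A\oplus X$; as $X$ is a direct summand of $B$, its inclusion is a split monomorphism, hence an $\M$-embedding, so $X\in K$ by Hypothesis \ref{hyp}. Symmetrically $A\cong B\oplus Y$ with $Y\in K$. Put $Z=X\oplus Y\in K$. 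A short computation then yields the ``stable'' isomorphism
$$A\oplus Z^{(\mathbb{N})}\cong B\oplus Y\oplus Z^{(\mathbb{N})}\cong B\oplus Z^{(\mathbb{N})},$$
using $Y\oplus Z^{(\mathbb{N})}\cong Z^{(\mathbb{N})}$.

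Next I would produce the envelope and show it splits off $A$ and $B$. Iterating $A\cong A\oplus Z$ one builds inside $A$ a nested chain of direct summands $Z_1\oplus\cdots\oplus Z_n$ with $Z_i\cong Z$, whose union is a copy of $Z^{(\mathbb{N})}$; each finite stage is a split monomorphism, hence an $\M$-embedding, so by Lemma \ref{compm}(5) the inclusion $j:Z^{(\mathbb{N})}\hookrightarrow A$ is an $\M$-embedding. Because $Z^{(\mathbb{N})}\in K$ and $A$ is $K_\M$-injective, any homomorphism from $Z^{(\mathbb{N})}$ into a $K_\M$-injective extends along $j$ by Definition \ref{rinj}, so $j$ is a $K_\M$-injective preenvelope in $\M$. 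Hence Theorem \ref{kminjenvel} applies and $Z^{(\mathbb{N})}$ has a $K_\M$-injective envelope $\iota:Z^{(\mathbb{N})}\hookrightarrow E$. Extending $j$ along $\iota$ gives $\bar j:E\to A$ with $\bar j\iota=j\in\M$; since $\iota$ is $\M$-essential (Theorem \ref{equal}), $\bar j\in\M$, and as $E$ is $K_\M$-injective it is a direct summand of $A$, say $A\cong E\oplus A'$. The identical argument for $B$ (via $B\cong B\oplus Z$) gives $B\cong E\oplus B'$.

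I would then record the routine additivity lemma: finite direct sums of $K_\M$-injectives are $K_\M$-injective, and finite direct sums of $\M$-essential $\M$-embeddings are again of this form, so by Theorem \ref{equal} one has $\widehat{M\oplus N}\cong\widehat M\oplus\widehat N$ (with $\widehat{\phantom{M}}$ denoting the envelope, unique up to isomorphism). From $Z^{(\mathbb{N})}\oplus Z^{(\mathbb{N})}\cong Z^{(\mathbb{N})}$ this gives $E\oplus E\cong E$, whence $A\oplus E\cong(E\oplus E)\oplus A'\cong E\oplus A'\cong A$, and likewise $B\oplus E\cong B$. Applying $\widehat{\phantom{M}}$ to the stable isomorphism of the first paragraph, and using $\widehat A=A$, $\widehat B=B$ together with additivity, gives $A\oplus E\cong B\oplus E$. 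Chaining the three isomorphisms yields $A\cong A\oplus E\cong B\oplus E\cong B$.

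The main obstacle is exactly the failure of the Noetherian hypothesis: $Z^{(\mathbb{N})}$ need not be $K_\M$-injective, so the naive Eilenberg swindle would leave the class of $K_\M$-injectives. Two points make the argument survive. The first is locating a copy of the infinite sum $Z^{(\mathbb{N})}$ as an $\M$-submodule of the injective module $A$ itself; this is what legitimizes forming its envelope through Theorem \ref{kminjenvel} and simultaneously forces that envelope to be a direct summand of $A$. The second, and genuinely delicate, point is defeating the cancellation problem $A\oplus E\cong B\oplus E\Rightarrow A\cong B$, which is false for $K_\M$-injectives in general; this is handled by arranging the absorptions $A\oplus E\cong A$ and $B\oplus E\cong B$, for which $E\oplus E\cong E$ is the crucial input. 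Verifying the finite additivity and essentiality facts feeding $E\oplus E\cong E$ is routine but is where the bookkeeping concentrates.
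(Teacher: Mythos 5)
Your proof is correct, but it takes a genuinely different route from the paper's. The paper runs the classical Bumby-style iteration inside $A$: it writes $A=C_0\oplus B_1$, $B_1=D_0\oplus A_1$, and so on, obtains an ascending chain of direct summands whose union $E$ is a pure (hence $\M$-) submodule of \emph{both} $A$ and $B_1\cong B$ simultaneously, applies Theorem \ref{kminjenvel} to get a single envelope $K(E)$ that is a direct summand of both, and then uses the swindle $E\cong C_0\oplus E$ to get $K(E)\cong C_0\oplus K(E)$ via \cite[Corollary 6.4.4]{EJ11}, so that $A=C_0\oplus K(E)\oplus Y\cong K(E)\oplus Y=B_1\cong B$ with no cancellation step at all. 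You instead split off $X,Y$ abstractly, run the swindle on $Z^{(\mathbb{N})}$ with $Z=X\oplus Y$, envelope it to $E$, and then need two extra moves the paper avoids: showing $E$ splits off $A$ and $B$ separately (your extension-along-$\iota$ argument, using $\M$-essentiality of $\iota$ and the splitting remark after Definition \ref{rinj}, is correct, and is a nice substitute for the paper's observation that the construction in Theorem \ref{kminjenvel} produces the envelope as a direct summand of the given preenvelope), and the absorption/cancellation chain $A\oplus E\cong A$, $B\oplus E\cong B$, $A\oplus E\cong B\oplus E$. Your route buys symmetry between $A$ and $B$ and avoids the inductive bookkeeping with the modules $C_m,D_m$; the paper's buys a shorter endgame. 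One caveat: your ``routine additivity lemma'' --- that a finite direct sum of $\M$-essential $\M$-embeddings into $K_\M$-injectives is again of this form, whence $\widehat{M\oplus N}\cong\widehat{M}\oplus\widehat{N}$ --- should not be attempted bare-handed in the $RD$ and pure cases, where essentiality is an abstract lifting condition rather than a lattice condition; the clean justification is exactly the citation the paper itself uses, namely \cite[Corollary 6.4.4]{EJ11} (finite direct sums of envelopes are envelopes, the class of $K_\M$-injectives being closed under finite direct sums), combined with Theorem \ref{equal} in both directions to pass between ``envelope'' and ``$\M$-essential $\M$-embedding into a $K_\M$-injective''. With that citation supplied, every step of your argument checks out.
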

\begin{proof} Let $A$ and $B$ be $K_\M$-injective modules, and $f:A\rightarrow B$, $g:B\rightarrow A$ be $\M$-embeddings. Then  $A, B\in K.$  Note that we have $A=C_0\oplus B_1$ and $B_1=D_0\oplus A_1$, where $B_1=g(B)\cong B$ and $A_1=gf(A)\cong A$. From the isomorphism $=gf: A\rightarrow A_1$, we have $A_1=C_1\oplus B_2$ with $C_1=gf(C_0)\cong C_0$ and $B_2=gf(B_1)\cong B_1\cong B_0$. Similarly, $B_2=D_1\oplus A_2$, with $A_2=gf(A_1)\cong A_1$ and $D_1=gf(D_0)\cong D_0$. Inductively, we can construct sequences $\{A_m\}_{m=1}^\infty$, $\{B_m\}_{m=1}^\infty$, $\{C_m\}_{m=0}^\infty$, $\{D_m\}_{m=0}^\infty$ of $R$-modules such that $$A=(\bigoplus\limits_{m=0}^n C_m)\oplus(\bigoplus\limits_{m=0}^n D_m)\oplus A_{n+1}$$ 
for each $n\geq 0$, and $A\cong A_n, B\cong B_n, C_0\cong C_n, D_0\cong D_n$ for each $n\geq 1,$	 and 
$$B_1=(\bigoplus\limits_{m=1}^n C_m)\oplus(\bigoplus\limits_{m=0}^{n-1} D_m)\oplus B_{n+1}$$ 
for each $n\geq 1.$	
	
Write $$E_n=(\bigoplus\limits_{m=1}^n C_m)\oplus(\bigoplus\limits_{m=0}^{n-1} D_m)$$ 	for each $n\geq 1.$	Then $\{E_n\}_{n=1}^\infty$ is an increasing sequence direct summands, and thus $\M$-submodules, of both $A$ and $B_1$. Set $$E=\bigcup\limits_{n=1}^\infty E_n.$$ Then $E$ is a pure submodule, and thus an $\M$-submodule, of both $A$ and $B_1$. So $E$ is in $K$. Since $A$  is $K_\M$-injective, so $A$ is a $K_\M$-injective preenvelope of $E$. It follows by Theorem \ref{kminjenvel} there exist a $K_\M$-injective envelope, denoted by $K(E)$, of $E\in K$. Consequently,  there exists $X\leq A$ and $Y\leq B_1$ such that $A=K(E)\oplus X$ and $B_1=K(E)\oplus Y$.

 Note that  $$E\cong (\bigoplus\limits_{m=1}^\infty C_m)\oplus(\bigoplus\limits_{m=0}^\infty D_m)\cong  (\bigoplus\limits_{m=0}^\infty C_m)\oplus(\bigoplus\limits_{m=0}^\infty D_m)\cong C_0\oplus E,$$ since $C_0\cong C_m$ for each $m\geq 1.$	
It follows by \cite[Corollary 6.4.4]{EJ11} that $K(E)\cong K(C_0\oplus E)\cong C_0\oplus K(E)$, where $K(C_0\oplus E)$ is the $K_\M$-injective envelope of $C_0\oplus E$. As a consequence, we have 
$$A=C_0\oplus B_1=C_0\oplus (K(E)\oplus Y)\cong (C_0\oplus K(E))\oplus Y\cong K_p(E)\oplus Y\cong B_1\cong B,$$ 
which states that $A$ is isomorphic to $B$.	
\end{proof}

The next two Corollaries are direct applications of Theorem \ref{main} with Example \ref{examp}.
\begin{corollary}
	If $f:A\rightarrow B$ and $g:B\rightarrow A$ are pure-embeddings $($resp., $RD$-embeddings, embeddings$)$ and $A,B$ are pure-injective $($resp., $RD$-injective, injective$),$ then $A$ is isomorphic  to $B$.	
\end{corollary}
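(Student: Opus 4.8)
The plan is to reproduce the Eilenberg--Mazur ``swindle'' underlying the Noetherian case, but to replace the infinite direct sum on which that argument rests by its $K_\M$-injective envelope, whose existence is now supplied by Theorem \ref{kminjenvel}. First I would split the two embeddings: since $A$ is $K_\M$-injective and $f\colon A\to B$ lies in $\M$ with $B\in K$, the observation following Definition \ref{rinj} makes $f(A)\cong A$ a direct summand of $B$, and symmetrically $g(B)=:B_1\cong B$ is a direct summand of $A$, say $A=C_0\oplus B_1$. Transporting the splitting $B=f(A)\oplus D_0'$ through the isomorphism $B\cong B_1$ induced by $g$ gives $B_1=D_0\oplus A_1$ with $A_1=gf(A)\cong A$. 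Iterating $gf\colon A\to A_1$ then produces the nested families $\{C_m\}$, $\{D_m\}$, $\{A_m\}$, $\{B_m\}$ with $C_m\cong C_0$ and $D_m\cong D_0$ for all $m$, as in the displayed decompositions, so that the accumulated summands $E_n=\bigl(\bigoplus_{m=1}^{n}C_m\bigr)\oplus\bigl(\bigoplus_{m=0}^{n-1}D_m\bigr)$ form an increasing chain of direct summands of both $A$ and $B_1$.

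Next I would pass to $E=\bigcup_n E_n$. As a direct union of direct summands it is a pure submodule, hence an $\M$-submodule, of both $A$ and $B_1$, so $E\in K$ and $E\hookrightarrow A$ is an $\M$-embedding. Since $A$ is $K_\M$-injective, any homomorphism from $E$ to a $K_\M$-injective module extends along this embedding, so $E\hookrightarrow A$ is a $K_\M$-injective preenvelope; by Theorem \ref{kminjenvel}, $E$ admits a $K_\M$-injective envelope $K(E)$. Comparing this envelope with the preenvelopes $E\hookrightarrow A$ and $E\hookrightarrow B_1$ (the two maps factor through one another, and left minimality of the envelope forces the resulting endomorphism of $K(E)$ to be an automorphism) shows that $K(E)$ is a direct summand of each, say $A=K(E)\oplus X$ and $B_1=K(E)\oplus Y$.

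Finally comes the swindle. Because every $C_m$ is isomorphic to $C_0$, re-indexing yields $E\cong C_0\oplus E$; and since $C_0$, being a direct summand of the $K_\M$-injective module $A$, is itself $K_\M$-injective (hence its own envelope), the compatibility of envelopes with direct sums, \cite[Corollary 6.4.4]{EJ11}, gives $K(E)\cong K(C_0\oplus E)\cong C_0\oplus K(E)$. Substituting, $A=C_0\oplus B_1\cong(C_0\oplus K(E))\oplus Y\cong K(E)\oplus Y\cong B_1\cong B$. I expect the single real obstacle to be precisely the place where the Noetherian hypothesis was previously invoked: the infinite sum $\bigoplus_m C_m\oplus\bigoplus_m D_m$ need not be $K_\M$-injective, so it cannot be identified directly with a summand of $A$. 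The decisive move is to work instead with the pure submodule $E$ and its envelope; what must be checked with care is that $E\hookrightarrow A$ really is a preenvelope (so that Theorem \ref{kminjenvel} applies) and that the absorption isomorphism $E\cong C_0\oplus E$ is inherited by $K(E)$.
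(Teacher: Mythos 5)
Your proposal is correct and takes essentially the same approach as the paper: the paper proves this corollary in one line by applying Theorem \ref{main} to the pairs $({}_R\Mod, Pure)$, $({}_R\Mod, RD)$ and $({}_R\Mod, Emb)$ of Example \ref{examp}, and your argument is precisely the paper's proof of Theorem \ref{main} --- the Bumby-style swindle with the non-$K_\M$-injective direct sum $E$ replaced by its envelope $K(E)$ from Theorem \ref{kminjenvel} --- instantiated in these cases. The details you flag as needing care (that $E\hookrightarrow A$ is a $K_\M$-injective preenvelope, that left minimality splits $K(E)$ off both $A$ and $B_1$, and that $E\cong C_0\oplus E$ transfers to $K(E)$ via \cite[Corollary 6.4.4]{EJ11}) are exactly the steps the paper carries out or leaves implicit.
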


\begin{corollary}
	If $f:A\rightarrow B$ and $g:B\rightarrow A$ are pure-embeddings and $A,B$ are flat cotorsion modules $($or $K^{\s\mbox{-}Tor}$-pure injective modules$),$ then $A$ is isomorphic  to $B$.	
\end{corollary}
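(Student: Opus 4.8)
The plan is to run the classical Bumby-style back-and-forth decomposition used for injective modules, but to replace the Noetherian hypothesis of \cite{MR24} — which was invoked only to keep a certain countable direct sum inside the class of $K_\M$-injective modules — by the existence of $K_\M$-injective envelopes furnished by Theorem \ref{kminjenvel}. The starting observation is the remark recorded after Definition \ref{rinj}: if $K$ is $K_\M$-injective and $K \hookrightarrow M \in \M$ with $M \in K$, then $K$ is a direct summand of $M$. Since $f$ and $g$ are $\M$-embeddings and $\M$ is closed under composition (routine for each of the three classes in Hypothesis \ref{hyp}), both $g$ and $gf$ are $\M$-embeddings, and $A,B \in K$ because $K$ is closed under $\M$-submodules.

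First I would peel off summands. As $B$ is $K_\M$-injective, $B_1 := g(B) \cong B$ is a direct summand of $A$, say $A = C_0 \oplus B_1$; as $A$ is $K_\M$-injective, $A_1 := gf(A) \cong A$ is a direct summand of $B_1$ (note $B_1 \in K$, being a summand of $A$), say $B_1 = D_0 \oplus A_1$. Transporting the first decomposition through the isomorphism $gf : A \xrightarrow{\sim} A_1$ and iterating, I would produce sequences $\{A_m\},\{B_m\},\{C_m\},\{D_m\}$ with $A \cong A_m$, $B \cong B_m$, $C_0 \cong C_m$, $D_0 \cong D_m$ and the nested decompositions
$$A = \Big(\bigoplus_{m=0}^{n} C_m\Big) \oplus \Big(\bigoplus_{m=0}^{n} D_m\Big) \oplus A_{n+1}, \qquad B_1 = \Big(\bigoplus_{m=1}^{n} C_m\Big) \oplus \Big(\bigoplus_{m=0}^{n-1} D_m\Big) \oplus B_{n+1}.$$

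Next I would set $E_n = (\bigoplus_{m=1}^{n} C_m) \oplus (\bigoplus_{m=0}^{n-1} D_m)$, a direct summand of both $A$ and $B_1$, and $E = \bigcup_n E_n$. As a direct union of an ascending chain of direct summands, $E$ is a pure — hence $\M$- — submodule of both $A$ and $B_1$, so $E \in K$ by Lemma \ref{compm}(1). The decisive step, where Theorem \ref{kminjenvel} replaces the Noetherian hypothesis, is that the inclusion $E \hookrightarrow A$ is a $K_\M$-injective preenvelope: it lies in $\M$, and since $E \in K$ and $A$ is $K_\M$-injective, any map from $E$ into a $K_\M$-injective module extends along it, which is precisely Definition \ref{rinj}. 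Thus $E$ has a $K_\M$-injective envelope $K(E)$, and the envelope–summand correspondence coming from Theorem \ref{equal} realizes $K(E)$ as a direct summand of both ambient modules, $A = K(E) \oplus X$ and $B_1 = K(E) \oplus Y$.

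Finally I would apply an Eilenberg swindle to $E$. Since $C_0 \cong C_m$ for every $m \ge 1$, reindexing gives $E \cong C_0 \oplus E$, and uniqueness of envelopes then yields $K(E) \cong C_0 \oplus K(E)$ via \cite[Corollary 6.4.4]{EJ11}. Combining,
$$A = C_0 \oplus B_1 = C_0 \oplus \big(K(E) \oplus Y\big) \cong \big(C_0 \oplus K(E)\big) \oplus Y \cong K(E) \oplus Y = B_1 \cong B,$$
as required. The step I expect to be the main obstacle is the simultaneous splitting of $K(E)$ off both $A$ and $B_1$: one must confirm that the inclusions are preenvelopes and then invoke the envelope–summand machinery of Theorems \ref{equal} and \ref{kminjenvel} to see the same envelope appear as a direct summand on each side. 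The compatibility of the swindle isomorphism with passage to envelopes is the other delicate point, as it rests on the functorial uniqueness of $K_\M$-injective envelopes rather than on any direct-sum closure of the $K_\M$-injective class — exactly the feature that lets one dispense with the Noetherian assumption.
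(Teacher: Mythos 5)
Your proposal is correct and follows essentially the paper's own route: the paper proves this corollary in one line by citing Theorem \ref{main} together with Example \ref{examp}(4),(5) (the pairs $(_RFlat, Pure)$ and $(\s\mbox{-}Tor, Pure)$ satisfy Hypothesis \ref{hyp}, and their $K_\M$-injective modules are exactly the flat cotorsion, resp.\ $K^{\s\mbox{-}Tor}$-pure injective, modules), while what you have written out is precisely the paper's proof of Theorem \ref{main} itself --- the same Bumby-style decompositions, the same union $E=\bigcup_n E_n$ whose envelope $K(E)$ (via Theorem \ref{kminjenvel}) splits off both $A$ and $B_1$, and the same Eilenberg swindle with \cite[Corollary 6.4.4]{EJ11}. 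So your argument matches the paper's, merely inlining the proof of the main theorem instead of citing it.
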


\begin{Conflict of interest}
The author declares that he has no conflict of interest.
\end{Conflict of interest}

\begin{Data Availability}
My manuscript has no associated data.
\end{Data Availability}

\end{document}